\newtheorem{theorem}{Theorem}
\newtheorem{proposition}[theorem]{Proposition}
\newenvironment{proof}[1][Proof]{\noindent\textbf{#1.} }{\ \rule{0.5em}{0.5em}}
\begin{document}

\title{\textbf{Global existence and persistence of mass for a nonlinear equation with fractional Laplacian}}
\author{\textbf{Eric Ruvalcaba-Robles} \\
Universidad Aut\'{o}noma de Aguascalientes\\
Departamento de Matem\'{a}ticas y F\'{\i}sica\\
Aguascalientes, Aguascalientes, Mexico.\\
\texttt{eric\_ruvalcaba\_robles@yahoo.com.mx}\bigskip\\
\textbf{Jos\'{e} Villa-Morales}\\
Universidad Aut\'{o}noma de Aguascalientes\\
Departamento de Matem\'{a}ticas y F\'{\i}sica\\
Aguascalientes, Aguascalientes, Mexico.\\
\texttt{jvilla@correo.uaa.mx}}
\date{}
\maketitle

\begin{abstract}
In this paper we study the partial differential equation
\begin{equation}\label{edpa}
	\begin{split}
	   	\partial_tu &= k(t)\Delta_\alpha u - h(t)\varphi(u),\\
	   	u(0) &= u_0.
	\end{split}
\end{equation}
Here $\Delta_\alpha$ is the fractional Laplacian, $k,h:[0,\infty)\to[0,\infty)$ are continuous functions and $\varphi:\mathbb{R}\to[0,\infty)$ is a convex differentiable function. If $u_0\in C_b(\mathbb{R}^d)\cap L^1(\mathbb{R}^d)$ we prove that \eqref{edpa} has a classical global solution and is non-negative if $u_0\geq0$. Imposing some restrictions on the parameters we prove that  the mass $M(t)=\int_{\mathbb{R}^d}u(t,x)dx$, $t>0$, of the system $u$ does not vanish in finite time, moreover we see that $\lim_{t\to\infty}M(t)>0$, under the restriction $\int_0^\infty h(s)ds<\infty$. A comparison result is also obtained for non-negative solutions, and as an application we get a better condition when $\varphi$ is a power function.

\vspace{0.5cm} \textbf{Mathematics Subject Classification (2010).} Primary
35K55, 35K45; Secondary 35B40, 35K20.

\textbf{Keywords.} positive solutions, global existence, persistence of mass, fractional differential equations, time-dependent generators.
\end{abstract}

\section{Introduction and statement of results}

In this paper we study the asymptotic behavior of the solutions to the nonlinear partial differential equation
\begin{equation}\label{edps}
	\begin{split}
		\partial_tu(t,x) &= k(t) \Delta_{\alpha} u(t,x) - h(t)\varphi\left(u(t,x)\right),\enspace 						(t,x)\in(0,\infty)\times\mathbb{R}^d,\\
		u(0,x) &= u_0(x), \enspace x\in\mathbb{R}^d,
	\end{split}
\end{equation}
where $\Delta_\alpha = -(-\Delta)^{\alpha/2}$, $0<\alpha<2$, is the $\alpha$-Laplacian (or fractional Laplacian).

The study of partial differential equations (PDE) with fractional diffusion begins with the work of Sugitani \cite{Su}. He studied the explosive behavior of the solutions to the equation
\begin{equation}\label{spsps}
	\begin{split}
		\partial_tu &= \Delta_\alpha u + \varphi(u),\\
		u(0,x) &= u_0(x), \;x\in\mathbb{R}^d.
	\end{split}
\end{equation}

Since Sugitani's work there have been many generalizations of \eqref{spsps}. In fact, in some case the reaction term is multiplicative perturbed by a time dependent function or furthermore if the parameter $\alpha$ is changed then some systems of fractional PDE are considered, see for example \cite{AV2}, \cite{GK}, \cite{PV} and the references there in.

On the other hand, the importance of modeling phenomena using fractional Laplacian is increasing. For example, they arise in fields like mathematical finance, molecular biology, hydrodynamics, statistical physics \cite{SZF}, also they arise in anomalous growth of certain fractal interfaces \cite{MW}, overdriven detonations in gases \cite{Cla} or anomalous diffusion in semiconductor growth \cite{Wo}.

Therefore, our first concern is to demonstrate the existence of classical solutions of (\ref{edps}).

\begin{theorem}\label{main}
Let us assume the following hypotheses:
\begin{itemize}
	\item [$(a)$]  $\varphi:\mathbb{R}\to[0,\infty)$ is a convex differentiable function, $\varphi(0)=0$ and 					$\varphi(x)>0$ if $x\neq0$.
	\item [$(b)$] $h:[0,\infty)\to[0,\infty)$ is a continuous function with $h(x)>0$ if $x\neq0$.
	\item [$(c)$] $k:[0,\infty)\to[0,\infty)$ is a continuous function.
	\item  [$(d)$] $\mathop{\sup}\limits_{t\in[0,T]}\int_0^t\left(\int_s^tk(r)dr\right)^{-1/\alpha}ds							<\infty,\;\forall$ $ T>0$.
	\item  [$(e)$] $u_0\in C_b(\mathbb{R}^d)\cap L^1(\mathbb{R}^d)$.
\end{itemize}
Under these conditions there exists a unique solution $u\in C_b^{1,2}\left((0,\infty)\times\mathbb{R}^d\right)$ of \eqref{edps}. Moreover, if $u_0\geq0$ then $u\geq0$.
\end{theorem}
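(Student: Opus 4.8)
The plan is to pass to the mild (integral) formulation, solve it by a contraction argument, bootstrap to a classical solution, and finally read off positivity from a linearized representation. First I would absorb the time dependence of the generator $k(t)\Delta_\alpha$ by a time change: writing $K(s,t)=\int_s^t k(r)\,dr$ and letting $p_\alpha(\tau,\cdot)$ be the transition density of the symmetric $\alpha$-stable semigroup, the evolution family $T_{s,t}f(x)=\int_{\mathbb{R}^d}p_\alpha(K(s,t),x-y)f(y)\,dy$ solves $\partial_t T_{s,t}f=k(t)\Delta_\alpha T_{s,t}f$. These operators are positivity-preserving contractions on $C_b(\mathbb{R}^d)$ and on $L^1(\mathbb{R}^d)$, and the scaling of the stable density yields the smoothing bound $\|\nabla_x p_\alpha(\tau,\cdot)\|_{L^1}\le C\,\tau^{-1/\alpha}$. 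A function $u$ then solves \eqref{edps} in the mild sense iff
\[
u(t,x)=T_{0,t}u_0(x)-\int_0^t h(s)\,T_{s,t}\big[\varphi(u(s,\cdot))\big](x)\,ds.
\]

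Next I would establish local existence and uniqueness by Banach's fixed point theorem applied to the right-hand side above, regarded as a map $\Phi$ on a ball of radius $R=2\|u_0\|_\infty$ in $C_b([0,T]\times\mathbb{R}^d)$. Because $\varphi\in C^1$ it is Lipschitz on $[-R,R]$, and using $\|T_{s,t}g\|_\infty\le\|g\|_\infty$ one bounds $\Phi(u)-\Phi(v)$ in sup norm by $(\mathrm{Lip}_R\varphi)\big(\int_0^T h(s)\,ds\big)\|u-v\|_\infty$; choosing $T$ small makes $\Phi$ a self-map of the ball and a contraction (note that hypothesis $(d)$ is not yet needed here). To upgrade to global existence I would derive the a priori bound $\|u(t)\|_\infty\le\|u_0\|_\infty$: since $h\ge0$ and $\varphi\ge0$ the Duhamel term has a sign, whence $u\le T_{0,t}u_0\le\|u_0\|_\infty$, while the positivity established below furnishes the lower bound. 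This keeps $u$ in a fixed compact range independent of $T$, so the local solution extends to all of $(0,\infty)$ with no finite-time blow-up.

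For the classical regularity I would differentiate the mild formula. The homogeneous term $T_{0,t}u_0$ is smooth for $t>0$ by the smoothing of the stable semigroup. In the Duhamel term one differentiates under the integral sign, letting the spatial derivative fall on the kernel; this produces the factor $\|\nabla_x p_\alpha(K(s,t),\cdot)\|_{L^1}\le C\,K(s,t)^{-1/\alpha}$, and hypothesis $(d)$ is precisely the statement that $\sup_{t\le T}\int_0^t K(s,t)^{-1/\alpha}\,ds<\infty$, so the differentiated integral converges and is continuous in $(t,x)$. Together with the smoothness of $\varphi$ this gives enough spatial regularity for $\Delta_\alpha u$ to be defined and continuous; the equation then makes $\partial_t u$ continuous, so that $u\in C_b^{1,2}$ is a genuine classical solution of \eqref{edps}. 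I expect reconciling the singularity of the kernel at $s=t$ with the integrability supplied by $(d)$, in this non-autonomous setting, to be the main technical obstacle.

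Finally, for positivity when $u_0\ge0$, note that a naive Picard iteration need not preserve the sign because the Duhamel term is subtracted. Instead I would linearize: since $u$ is bounded and $\varphi\in C^1$ with $\varphi(0)=0$, the quotient $b(t,x):=\varphi(u(t,x))/u(t,x)$, set equal to $\varphi'(0)$ where $u=0$, is bounded and continuous, and $u$ solves the linear equation $\partial_t u=k(t)\Delta_\alpha u-h(t)\,b(t,x)\,u$. Its Feynman--Kac representation expresses $u(t,x)$ as the expectation of $u_0$ evaluated along the time-changed stable process, weighted by the strictly positive factor $\exp\big(-\int_0^t h(r)\,b(r,\cdot)\,dr\big)$; since $u_0\ge0$ this forces $u\ge0$, which also closes the lower bound used above.
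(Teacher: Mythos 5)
Your overall architecture coincides with the paper's: the Duhamel formulation with kernel $p(K(s,t),\cdot)$, a Banach contraction on a ball of $C_b([0,T]\times\mathbb{R}^d)$ using the local Lipschitz property of $\varphi$, regularity by letting a derivative fall on the kernel with hypothesis $(d)$ controlling $\int_0^t h(s)K(s,t)^{-1/\alpha}\,ds$, and global extension from the a priori bound $0\le u\le p(K(t))\ast u_0$ once positivity is known. Where you genuinely diverge is the positivity step. The paper argues analytically: it first shows that $\varphi(u(t,\cdot))\in L^1$ and hence, via dominated convergence in the Duhamel formula, that $u(\tilde t,x)\to 0$ as $||x||\to\infty$; this guarantees the infimum of $u$ over $[0,T^\star]\times\mathbb{R}^d$ is attained at some interior point $(\tilde t,\tilde x)$, where the reverse maximum principle $\Delta_\alpha u(\tilde t,\tilde x)\le 0$ for a global minimum forces $h(\tilde t)\varphi(u(\tilde t,\tilde x))\le 0$, a contradiction if the minimum were negative. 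Your linearization $b=\varphi(u)/u$ (bounded and continuous because $\varphi\in C^1$, $\varphi(0)=0$ and $u$ is bounded) followed by a Feynman--Kac representation is a legitimate alternative, and is in fact close in spirit to the paper's own proof of its comparison theorem, which linearizes $\varphi(v)-\varphi(u)$ by the mean value theorem before applying the same minimum-principle argument. Your route has the advantage of not needing classical regularity or spatial decay, but the price is real: you must construct the probabilistic representation for the time-inhomogeneous linear equation $\partial_t u=k(t)\Delta_\alpha u-h(t)b(t,x)u$ and identify it with $u$ via a uniqueness (Gronwall) argument for the linear mild equation, none of which is spelled out. If you instead finish with the global-minimum argument, you must supply the decay of $u(\tilde t,\cdot)$ at spatial infinity, which your sketch omits entirely and which is where the paper does nontrivial work.

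There is also a concrete gap in your regularity step. Hypothesis $(d)$ only makes $K(s,t)^{-1/\alpha}$ integrable against $h(s)\,ds$, so you cannot put both spatial derivatives on the kernel in the Duhamel term: that would cost $||\partial_x^2 p(K(s,t))||_1\le c\,K(s,t)^{-2/\alpha}$, which $(d)$ does not control. The paper obtains $u(t)\in C_b^2$ by restarting the Duhamel formula at a positive time $\tilde t$, where $\partial_x u(\tilde t)$ already exists, and distributing the derivatives as $\bigl(\partial_x p(K(s,t))\bigr)\ast\bigl(\varphi'(u(s))\,\partial_x u(s)\bigr)$ --- one on the kernel and one on the nonlinearity via the chain rule. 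Your phrase ``together with the smoothness of $\varphi$'' gestures at this, but the mechanism must be made explicit, since $C_b^2$ regularity is exactly what makes $\Delta_\alpha u$ well defined through the integral representation of the fractional Laplacian.
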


Since the fractional Laplacian $\Delta_\alpha$ is an integral operator (see Section \ref{secpreres}) most of the solutions of \eqref{edps} are interpreted in a weak sense. Thanks to an integral representation of $\Delta_\alpha$, given in \cite{DI}, it is possible to extend the domain of $\Delta_\alpha$. Using such extension is proved in \cite{DI} that certain Hamilton-Jacobi equation, perturbed by $\Delta_\alpha$, has solutions in $C_b^2\left((0,\infty)\times\mathbb{R}^d\right)$, when $\alpha\in(1,2)$. In our context the regularity of the solutions is mainly attributed to the above conditions $(a)$ and $(d)$. In particular, when $h\equiv 1$ and $k\equiv 1$ we obtain the same condition as in \cite{DI}.

Using the corresponding Duhamel equation associated with \eqref{edps} we prove the existence of local solutions to \eqref{edps}, through the Banach fix point principle. Then we show that such local solution can be extended (this is a usual way to get a global solution, see \cite{AB}). 

Otherwise, we are interested in the asymptotic behavior of solutions of \eqref{edps} when the initial datum is non-negative. Here the main ingredient in the proof of the positivity of the solutions is a reverse maximum principle obtained in \cite{DI}. The difficulty in the application of such principle is that the extremes have to be global, no local as in the classical case. To deal with this problem is proved first that for fix times the solutions vanish at space infinity (see \eqref{dppde}).

\begin{theorem}\label{pcasi}
Let us assume the hypotheses of Theorem \ref{main}. Let $u$ be the solution of \eqref{edps} with 	$u_0\geq0$ and $||u_0||_1>0$. For each $t\geq0$ we set
	$$M(t) = \int_{\mathbb{R}^d}u(t,x)dx.$$
Then
\begin{equation}\label{epmsp}
	M(t) \geq ||u_0||_1 \mathop{\exp}\left\{-\frac{\varphi(||u_0||_u)}{||u_0||_u}\int_0^t h(s)ds					\right\},\;\forall t\geq0.
\end{equation}
Moreover, the limit $\mathop{\lim}_{t\to\infty}M(t)=M(\infty)$ exists and $M(\infty)>0$ if $\int_0^\infty h(s)ds<\infty$.
\end{theorem}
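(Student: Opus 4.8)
The plan is to track the evolution of the mass $M(t)$ by integrating equation \eqref{edps} over $\mathbb{R}^d$. First I would differentiate under the integral sign to obtain
$$M'(t) = \int_{\mathbb{R}^d}\partial_t u(t,x)\,dx = k(t)\int_{\mathbb{R}^d}\Delta_\alpha u(t,x)\,dx - h(t)\int_{\mathbb{R}^d}\varphi(u(t,x))\,dx.$$
The decisive point is that $\int_{\mathbb{R}^d}\Delta_\alpha u(t,\cdot)\,dx = 0$, which reflects the fact that the $\alpha$-stable semigroup generated by $\Delta_\alpha$ is mass preserving; this is where I expect the technical work to concentrate, since $\Delta_\alpha$ is nonlocal and the vanishing of this integral must be justified from the integrability of $u(t,\cdot)$ together with its decay at spatial infinity (established via \eqref{dppde}). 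Granting this, the diffusion term drops out and $M'(t) = -h(t)\int_{\mathbb{R}^d}\varphi(u(t,x))\,dx$. Since $h\geq 0$, $\varphi\geq 0$ and $u\geq 0$ by Theorem \ref{main}, we read off $M'(t)\leq 0$, so that $M$ is non-increasing and bounded below by $0$; hence $M(\infty)=\lim_{t\to\infty}M(t)$ exists.

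To obtain the lower bound I would control the reaction integral linearly by $M(t)$. Comparison with the pure diffusion --- the reaction term $-h\varphi(u)$ acts as a sink, while the $\alpha$-stable evolution is a sup-norm contraction preserving constants --- shows that $0\leq u(t,x)\leq\|u_0\|_u$ for all $t,x$. The hypotheses on $\varphi$, namely convexity with $\varphi(0)=0$, make $y\mapsto\varphi(y)/y$ non-decreasing on $(0,\infty)$, whence $\varphi(y)\leq\frac{\varphi(\|u_0\|_u)}{\|u_0\|_u}\,y$ for every $y\in[0,\|u_0\|_u]$. Applying this pointwise to $u(t,x)$ and integrating gives
$$\int_{\mathbb{R}^d}\varphi(u(t,x))\,dx \leq \frac{\varphi(\|u_0\|_u)}{\|u_0\|_u}\int_{\mathbb{R}^d}u(t,x)\,dx = \frac{\varphi(\|u_0\|_u)}{\|u_0\|_u}\,M(t).$$

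Combining the last two observations yields the differential inequality
$$M'(t) \geq -\frac{\varphi(\|u_0\|_u)}{\|u_0\|_u}\,h(t)\,M(t).$$
An application of Gronwall's lemma --- equivalently, noting that $t\mapsto M(t)\exp\{\frac{\varphi(\|u_0\|_u)}{\|u_0\|_u}\int_0^t h(s)\,ds\}$ is non-decreasing --- then gives
$$M(t)\geq M(0)\exp\left\{-\frac{\varphi(\|u_0\|_u)}{\|u_0\|_u}\int_0^t h(s)\,ds\right\},$$
which is exactly \eqref{epmsp} since $M(0)=\|u_0\|_1$. Finally, if $\int_0^\infty h(s)\,ds<\infty$ the exponent remains bounded as $t\to\infty$, so letting $t\to\infty$ in this bound yields $M(\infty)\geq\|u_0\|_1\exp\{-\frac{\varphi(\|u_0\|_u)}{\|u_0\|_u}\int_0^\infty h(s)\,ds\}>0$, using $\|u_0\|_1>0$. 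The chief obstacle throughout is the rigorous justification of differentiating under the integral sign and of the identity $\int_{\mathbb{R}^d}\Delta_\alpha u\,dx=0$; once the uniform bound $u\leq\|u_0\|_u$ is secured, the convexity estimate and the Gronwall step are routine.
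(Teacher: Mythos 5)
Your overall strategy --- mass is non-increasing, the reaction integral is controlled linearly by $M(t)$ via convexity, and Gronwall closes the bound --- is exactly the paper's, and your convexity estimate and Gronwall step are fine (the paper phrases the last step as comparison with the ODE \eqref{edoaux}, which is the same thing). The genuine gap is at the very first step, and you have correctly located it yourself without closing it: the identity $M'(t)=k(t)\int_{\mathbb{R}^d}\Delta_\alpha u\,dx-h(t)\int_{\mathbb{R}^d}\varphi(u)\,dx$ together with $\int_{\mathbb{R}^d}\Delta_\alpha u(t,x)\,dx=0$ is asserted, not proved. This is not routine here. Theorem \ref{main} gives $u(t)\in C_b^2(\mathbb{R}^d)$ and $u(t)\in L^1(\mathbb{R}^d)$, but nothing about integrability of $\partial_x u$ or $\partial_x^2 u$; to push the integral through the nonlocal representation \eqref{dexpla} by Fubini you would need a bound of the type $\|u(\cdot+z)-u(\cdot)-\nabla u(\cdot)\cdot z\|_{L^1_x}\le C\|z\|^2$ near $z=0$, i.e.\ second derivatives in $L^1$, which is not available. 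Likewise, differentiating $M$ under the integral sign requires a dominating function for $\partial_t u(t,\cdot)$ that has not been produced. The pointwise decay \eqref{dppde} does not substitute for either of these.

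The paper avoids the problem entirely by working at the level of the Duhamel equation \eqref{eqint}: integrating it in $x$, using \eqref{ppu} and Fubini--Tonelli (everything is non-negative, so no domination is needed), one gets directly the integrated identity
\begin{equation*}
M(t)=\|u_0\|_1-\int_0^t h(s)\int_{\mathbb{R}^d}\varphi(u(s,y))\,dy\,ds ,
\end{equation*}
which is \eqref{itpm}. This gives monotonicity and boundedness of $M$ at once, and your convexity bound then yields the integral inequality $M(t)\ge\|u_0\|_1-\frac{\varphi(\|u_0\|_u)}{\|u_0\|_u}\int_0^t h(s)M(s)\,ds$, to which the Gronwall/ODE-comparison argument applies. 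So the fix is simply to replace your differential identity for $M'$ by the integrated mild formulation; with that substitution the rest of your argument goes through as written.
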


We interpret $M(t)$ as the total mass of the system $u$ at time $t\geq0$. If $\int_0^\infty h(s)ds<\infty$, then $h$ vanishes at infinity ($\lim_{t \rightarrow \infty} h(t)=0$). This means that the contribution of the reaction term in \eqref{edps} is very small for large time, in such a way that the system $u$ does not vanish, almost everywhere, at finite time. In fact, this is because the solution is positive and the contribution of the negative term is small. Moreover the system persists at infinity. As we will see the proof of Theorem \ref{pcasi} uses strongly the convexity of $\varphi$.

By $K(t)$, $t\geq 0$, we mean the integral $\int_{0}^{t}k(s)ds$. Imposing an additional condition, on the multiplicative noise of the fractional diffusion term of \eqref{edps}, we have the rate of convergence in $L^p$ norm of the system $u(t)$, when $t\to\infty$:

\begin{theorem}\label{tcpa}
Assume the hypotheses of Theorem \ref{pcasi}. If $\int_1^\infty k(s)ds<\infty$, then
	$$\mathop{\lim}\limits_{t\to\infty}K(t)^{\frac{d}{\alpha}\left(1-\frac{1}{p}\right)}||u(t)-M(\infty) 			p(K(t))||_p = 0,$$
for each $p\geq1$.
\end{theorem}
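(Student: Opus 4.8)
The plan is to extract the large-time profile of $u$ from its Duhamel representation, using the self-similarity of the $\alpha$-stable kernel together with the mass information furnished by Theorem~\ref{pcasi}. Write $p(\tau,\cdot)$ for the transition density of $\Delta_\alpha$ and recall its scaling $p(\tau,x)=\tau^{-d/\alpha}p(1,\tau^{-1/\alpha}x)$, so that $\|p(\tau)\|_p=\tau^{-\frac{d}{\alpha}(1-1/p)}\|p(1)\|_p$; in particular the normalising factor $K(t)^{\frac{d}{\alpha}(1-1/p)}$ is precisely the one rendering $K(t)^{\frac{d}{\alpha}(1-1/p)}\|p(K(t))\|_p$ constant, and the whole statement is an assertion about the regime in which the accumulated diffusion $K(t)$ is large. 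Since the propagator of $k(t)\Delta_\alpha$ from time $s$ to time $t$ is convolution with $p(K(t)-K(s),\cdot)$, the mild form of \eqref{edps} reads
\begin{equation*}
u(t)=p(K(t))\ast u_0-\int_0^t h(s)\,p(K(t)-K(s))\ast\varphi(u(s))\,ds.
\end{equation*}
Integrating in $x$ and using $\int p(\tau,\cdot)=1$ gives $M(t)=\|u_0\|_1-\int_0^t h(s)\|\varphi(u(s))\|_1\,ds$, so that the monotone convergence $M(t)\downarrow M(\infty)$ of Theorem~\ref{pcasi} makes the mass defect summable,
\begin{equation*}
\int_0^\infty h(s)\,\|\varphi(u(s))\|_1\,ds=\|u_0\|_1-M(\infty)<\infty,
\end{equation*}
and it is this summable weight $h(s)\|\varphi(u(s))\|_1$ that drives the estimate.

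The analytic core is the elementary convergence of the linear flow to the kernel: if $g\in L^1(\mathbb{R}^d)$ with $\int g=m$, then $\tau^{\frac{d}{\alpha}(1-1/p)}\|p(\tau)\ast g-m\,p(\tau)\|_p\to0$ as $\tau\to\infty$. I would prove this by writing $p(\tau)\ast g-m\,p(\tau)=\int[p(\tau,\cdot-y)-p(\tau,\cdot)]\,g(y)\,dy$, rescaling by $\tau^{1/\alpha}$ so that the prefactor becomes $\|p(1)\|_p$-sized, and then using the continuity of translations in $L^1$ after reducing to compactly supported $g$; the same rescaling yields the companion fact $\tau^{\frac{d}{\alpha}(1-1/p)}\|p(\tau-K(s))-p(\tau)\|_p\to0$ for each fixed $s$. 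Taking $g=u_0$ (so $m=\|u_0\|_1$) disposes of the free term $p(K(t))\ast u_0-\|u_0\|_1\,p(K(t))$.

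With these tools I decompose
\begin{equation*}
\begin{split}
u(t)-M(\infty)p(K(t))=\;&\big[p(K(t))\ast u_0-\|u_0\|_1\,p(K(t))\big]\\
&-\int_0^t h(s)\big[p(K(t)-K(s))\ast\varphi(u(s))-\|\varphi(u(s))\|_1\,p(K(t))\big]\,ds\\
&-\Big(\int_t^\infty h(s)\|\varphi(u(s))\|_1\,ds\Big)\,p(K(t)),
\end{split}
\end{equation*}
having used the mass identity to replace $\|u_0\|_1-M(\infty)$ by the integral of the defect. After multiplying by $K(t)^{\frac{d}{\alpha}(1-1/p)}$, the first bracket vanishes by the lemma and the last term is bounded by $\|p(1)\|_p\int_t^\infty h(s)\|\varphi(u(s))\|_1\,ds$, a tail of a convergent integral. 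For the middle integral I split the $s$-range at the instant where $K(s)=K(t)/2$: on the early part $K(t)-K(s)\ge K(t)/2$ remains comparable to $K(t)$, the integrand tends to $0$ pointwise by the lemma and its companion, and it is dominated by $C\,h(s)\|\varphi(u(s))\|_1$, so dominated convergence disposes of it.

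The main obstacle is the late part of the middle integral, where $s\uparrow t$, the clock $K(t)-K(s)$ collapses and $p(K(t)-K(s))$ concentrates, so that neither the lemma nor the crude smoothing bound $\|p(\tau)\ast f\|_p\le C\,\tau^{-\frac{d}{\alpha}(1-1/p)}\|f\|_1$ survives the normalisation. Here I would use the positivity/comparison structure: because $\varphi\ge0$ and $h\ge0$ one has $0\le u(s)\le p(K(s))\ast u_0$, which gives the sharp decay $\|u(s)\|_p\le C\,K(s)^{-\frac{d}{\alpha}(1-1/p)}\|u_0\|_1$, while convexity with $\varphi(0)=0$ supplies the chord bound $\varphi(u(s))\le\frac{\varphi(\|u_0\|_u)}{\|u_0\|_u}\,u(s)$ on $[0,\|u_0\|_u]$ (exactly the quantity of Theorem~\ref{pcasi}), so that $\|\varphi(u(s))\|_p$ inherits the same decay. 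Combining this with the $L^p$-contraction $\|p(\tau)\ast f\|_p\le\|f\|_p$ and with $K(s)\ge K(t)/2$ on the late part bounds the normalised contribution by $C\int_{\{K(s)\ge K(t)/2\}}h(s)\,ds$, a tail of $\int_0^\infty h(s)\,ds$ that vanishes when $\int_0^\infty h(s)\,ds<\infty$, the persistence regime of Theorem~\ref{pcasi}. Letting $t\to\infty$ and assembling the four contributions gives the claim for every $p\ge1$; the delicate point throughout is to keep the constants uniform in $t$ while the two time-regimes are matched, which is where the comparison between $K(t)-K(s)$ and $K(t)$ is used most heavily.
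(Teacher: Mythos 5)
Your route is genuinely different from the paper's for $p>1$. The paper only ever works with the Duhamel formula in $L^1$: it proves the case $p=1$ by inserting an intermediate time $\tilde t$, comparing $u(t)$ with $p(K(\tilde t,t))\ast u(\tilde t)$, applying Proposition \ref{prop1}$(g)$, and taking a double $\limsup$ (first $t\to\infty$, then $\tilde t\to\infty$); for $p>1$ it never touches the integral term again, but instead interpolates via $\|f\|_p\le\|f\|_1^{1/(2p)}\|f\|_{2p-1}^{1-1/(2p)}$ and uses $0\le u(t)\le p(K(t))\ast u_0$ together with \eqref{dnpv} to show that $K(t)^{\frac{d}{\alpha}(1-\frac1p)}\|u(t)-M(\infty)p(K(t))\|_{2p-1}^{1-1/(2p)}$ stays bounded, so the whole weighted $L^p$ norm is controlled by $\|u(t)-M(\infty)p(K(t))\|_1^{1/(2p)}\to0$. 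You instead attack the weighted $L^p$ norm head-on, which forces you to prove a weighted $L^p$ strengthening of Proposition \ref{prop1}$(g)$ (true, and provable by the scaling-plus-density argument you sketch, but more than what the paper cites) and, crucially, to estimate the Duhamel integral in $L^p$ near $s=t$.

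That last step is where there is a genuine gap. On the late range $\{K(s)\ge K(t)/2\}$ you bound the normalised contribution by $C\int_{\{K(s)\ge K(t)/2\}}h(s)\,ds$ and need this tail to vanish, i.e. you need $\int_0^\infty h(s)\,ds<\infty$. But that is not a hypothesis of Theorem \ref{tcpa}: the hypotheses of Theorem \ref{pcasi} are only those of Theorem \ref{main} plus $u_0\ge0$ and $\|u_0\|_1>0$; the condition $\int_0^\infty h<\infty$ appears in Theorem \ref{pcasi} only as a sufficient condition for the conclusion $M(\infty)>0$, not as an assumption. The only summability available for free is $\int_0^\infty h(s)\|\varphi(u(s))\|_1\,ds\le\|u_0\|_1$, and replacing $\|\varphi(u(s))\|_p$ by interpolation against $\|\varphi(u(s))\|_1^{1/p}$ still leaves a factor $h(s)\|\varphi(u(s))\|_1^{1/p}$ whose integrability does not follow. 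So your argument proves the theorem only under an extra hypothesis; to remove it you would need something like the paper's interpolation trick, which disposes of all $p>1$ using only the $p=1$ case and the a priori bound $u\le p(K(t))\ast u_0$. (Separately, both your scaling lemma and the paper's appeal to Proposition \ref{prop1}$(g)$ are large-$\tau$ statements, while the stated hypothesis $\int_1^\infty k(s)\,ds<\infty$ forces $K(t)$ to remain bounded; you silently assume $K(t)\to\infty$, but the paper's own proof has the same tension, so I do not count this against you.)
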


As we already mentioned most of the works in the literature deal with weak solutions of \eqref{edps}. For example, in \cite{FK} is studied the decay of mass of the equation \eqref{edps} in the case $k\equiv1, h\equiv1$ and $\varphi(x)=x^\beta$, $\beta>1$. More generally, in \cite{JS} is studied the decay of mass when $k(t)=t^\sigma$, $\sigma\geq 0$, $h:[0,\infty)\to[0,\infty)$ is continuous and $\varphi(x)=x^\beta, \beta>1$.
In both works the existence, in the mild sense, of the respective solutions is assumed. A basic ingredient in their proof of mass decay is the positivity of the solutions, as we could not find a specific reference for such result we include a proof in Theorem \ref{main}. 
The positivity property of the solutions of \eqref{edps}, when the initial datum is non-negative, is close related with the maximum principle, and such principle is commonly applied for classical solutions (see for example \cite{AB} or \cite{Pi}). This is one of the reasons we need classical solutions of \eqref{edps} instead of mild solutions to ensure the positivity of such solutions.

If $u$ and $v$ are the solutions of \eqref{edps} with initial conditions $u_0$ and $v_0$, respectively, then $v_0\geq u_0$ implies $v\geq u$ (see Theorem \ref{cpth} below). As a consequence of this comparison result, and assuming that $\varphi$ is a power function, we can weak the condition $\int_0^\infty h(s)ds<\infty$ in Theorem \ref{pcasi}. The precise condition is given in:

\begin{theorem}\label{cppo}
Let us assume the hypotheses of Theorem \ref{main}. Let $u$ be the solution of \eqref{edps} with $u_0\geq0$ and $||u||_1>0$. If $\varphi(x)=|x|^\beta$, $\beta>1$, and
	$$\int_1^\infty h(s)K(s)^{-\frac{d}{\alpha}\left(\beta-1\right)}ds<\infty,$$
then there exists $M(\infty)\in(0,\infty)$ such that $\lim_{t\to\infty}M(t)=M(\infty)$.
\end{theorem}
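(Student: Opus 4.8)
The plan is to leverage the comparison result (Theorem~\ref{cpth}) together with Theorem~\ref{pcasi} to reduce the problem to a situation where the mass is bounded below by a quantity that is controlled by the hypothesis $\int_1^\infty h(s)K(s)^{-\frac{d}{\alpha}(\beta-1)}ds<\infty$. The key observation is that for a non-negative solution $u$, the $L^\infty$ norm of $u(t)$ decays like the heat kernel of the fractional Laplacian under the time-change $K(t)$, namely one expects $\|u(t)\|_\infty \leq C\,K(t)^{-d/\alpha}$ (this is the type of smoothing estimate implicit in Theorem~\ref{tcpa}, where the natural scaling is $K(t)^{-\frac{d}{\alpha}(1-1/p)}$ for the $L^p$ norm). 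First I would establish such a decay bound for $\|u(t)\|_\infty$ in terms of $K(t)$, either directly from the Duhamel representation used to construct the solution in Theorem~\ref{main} or by comparison with the solution of the purely diffusive equation (the case $h\equiv 0$), whose explicit behaviour is governed by the $\alpha$-stable transition density $p(K(t),\cdot)$.

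Next I would differentiate the mass. Integrating \eqref{edps} in space and using that $\int_{\mathbb{R}^d}\Delta_\alpha u\,dx=0$ for integrable $u$, I obtain
\begin{equation}\label{massderiv}
	M'(t) = -h(t)\int_{\mathbb{R}^d}\varphi(u(t,x))\,dx = -h(t)\int_{\mathbb{R}^d}|u(t,x)|^\beta\,dx,
\end{equation}
where I have used $\varphi(x)=|x|^\beta$. Since $u\geq 0$, I can bound the reaction integral by factoring out the sup-norm:
\begin{equation}\label{massbound}
	\int_{\mathbb{R}^d}u(t,x)^\beta\,dx \leq \|u(t)\|_\infty^{\beta-1}\int_{\mathbb{R}^d}u(t,x)\,dx = \|u(t)\|_\infty^{\beta-1}M(t).
\end{equation}
Combining \eqref{massderiv} and \eqref{massbound} with the decay estimate $\|u(t)\|_\infty^{\beta-1}\leq C\,K(t)^{-\frac{d}{\alpha}(\beta-1)}$ gives the differential inequality $M'(t)\geq -C\,h(t)K(t)^{-\frac{d}{\alpha}(\beta-1)}M(t)$, from which Gronwall's lemma yields
$$
	M(t) \geq M(1)\exp\left\{-C\int_1^t h(s)K(s)^{-\frac{d}{\alpha}(\beta-1)}\,ds\right\}.
$$
The hypothesis makes the integral converge, so the exponential stays bounded away from zero, and since $M$ is non-increasing by \eqref{massderiv} it has a limit $M(\infty)\geq M(1)\exp\{-C\int_1^\infty\cdots\}>0$. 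Here $M(1)>0$ follows from Theorem~\ref{pcasi} applied on $[0,1]$, where $\int_0^1 h(s)ds<\infty$ automatically by continuity.

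The main obstacle will be the rigorous justification of the sup-norm decay $\|u(t)\|_\infty\leq C\,K(t)^{-d/\alpha}$ for the full nonlinear problem, since the comparison theorem only gives $u\leq v$ where $v$ solves the linear equation $\partial_t v = k(t)\Delta_\alpha v$ with $v_0=u_0$; one must then verify that this linear solution, written via the time-changed stable semigroup, satisfies $\|v(t)\|_\infty = \|p(K(t),\cdot)*u_0\|_\infty \leq \|u_0\|_1\,\|p(K(t),\cdot)\|_\infty \leq C\|u_0\|_1 K(t)^{-d/\alpha}$, using the self-similarity $p(K(t),x)=K(t)^{-d/\alpha}p(1,K(t)^{-1/\alpha}x)$ of the $\alpha$-stable density. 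Handling the behaviour near $t=0$, where $K(t)\to 0$ and the bound degenerates, is the reason for splitting the integral at $t=1$ and treating $[0,1]$ separately via Theorem~\ref{pcasi}; care is also needed to ensure all integrability and differentiation-under-the-integral manipulations in \eqref{massderiv} are legitimate for the classical solution provided by Theorem~\ref{main}.
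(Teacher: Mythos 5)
Your argument is correct, but it follows a genuinely different route from the paper's. You derive the pointwise decay $\|u(t)\|_\infty\le \|p(K(t))\|_\infty\|u_0\|_1\le c\,K(t)^{-d/\alpha}$ from $0\le u(t)\le p(K(t))\ast u_0$, feed $\|u(t)\|_\beta^\beta\le\|u(t)\|_\infty^{\beta-1}M(t)$ into the mass identity \eqref{itpm}, and close with a Gronwall-type lower bound, splitting at $t=1$ and using Theorem \ref{pcasi} to get $M(1)>0$. The paper instead keeps the full $L^\beta$ bound $\|u(t)\|_\beta^\beta\le\min\{cK(t)^{-\frac{d}{\alpha}(\beta-1)}\|u_0\|_1^\beta,\ \|u_0\|_\beta^\beta\}$, applies it to the solution $u_\varepsilon$ with scaled-down datum $\varepsilon u_0$ so that superlinearity produces a factor $\varepsilon^{\beta-1}$ making the loss of mass negligible, concludes $M_\varepsilon(\infty)\ge\tfrac{\varepsilon}{2}\|u_0\|_1$, and only then invokes the comparison Theorem \ref{cpth} to transfer positivity to $M(\infty)$. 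Your version avoids Theorem \ref{cpth} altogether and yields the stronger, quantitative bound $M(\infty)\ge M(1)\exp\{-C\int_1^\infty h(s)K(s)^{-\frac{d}{\alpha}(\beta-1)}ds\}$ in the spirit of \eqref{epmsp}; the paper's $\varepsilon$-scaling trick yields only positivity but generalizes more readily to nonlinearities where one controls $\|\varphi(u_\varepsilon)\|_1$ by homogeneity rather than by an $L^\infty$ interpolation. Two small points to tidy up: the "main obstacle" you identify is not one, since the needed inequality $0\le u\le p(K(t))\ast u_0$ is immediate from the Duhamel formula \eqref{eqint} (the paper records it as \eqref{ceye}) and does not require any comparison theorem (indeed Theorem \ref{cpth} compares two solutions of the same nonlinear equation, not a nonlinear with a linear one); and the bound $\|p(t)\|_\infty\le c\,t^{-d/\alpha}$ is the $\mu=\infty$ endpoint not literally covered by \eqref{dnpv}, so you should justify it by the self-similarity of the stable density, as you indicate.
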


The previous result is a generalization of Theorem 3.1 in \cite{JS}, since the continuity of $h:[0,\infty)\to[0,\infty)$ implies $\int_0^1 h(s)ds<\infty$, an hypothesis required in \cite{JS}.

Notice that $K$ increasing implies
	$$\int_1^\infty h(s)K(s)^{-\frac{d}{\alpha}\left(\beta-1\right)}ds \leq K(1)^{-\frac{d}{\alpha}						\left(\beta-1\right)}\int_1^\infty h(s)ds.$$
On the other hand, if we take $h \equiv 1$ and 
	$$k(t)=e^t,\;\forall t>0,$$
then $\int_0^\infty h(t)dt =  \infty$, and
	$$\int_1^\infty h(s)K(s)^{-\frac{d}{\alpha}\left(\beta-1\right)}ds = \int_1^\infty(e^s-1)^{-\frac{d}				{\alpha}\left(\beta-1\right)}ds < \infty.$$
This means that in the particular case $\varphi(x)=|x|^\beta$, $\beta>1$, we have a better condition for the convergence of $\mathop{\lim}_{t\to\infty}M(t)=M(\infty)$.

The paper is organized as follows. In Section \ref{secpreres} we introduce the fractional Laplacian, $\Delta_\alpha$, and the fundamental solution $p(t,x)$ of $\partial_t u=\Delta_\alpha u$, some of their properties are also stated. In Section \ref{secprofr} we give the proof of the theorems.

\section{Preliminary results}\label{secpreres}

If $X=\{X(t):t\geq 0$\} is a symmetric $\alpha$-stable process then $\{p(t,x):t>0\}$ is its transition density. An analytical way to define the functions $p(t,x)$ is through their Fourier transform
\begin{equation}\label{dtfd}
	\int_{\mathbb{R}^d} e^{z\cdot x i}p(t,x)dx =e^{-t||z||^\alpha},\enspace\forall t>0, z\in\mathbb{R}^d,
\end{equation}
where $\cdot$ and $||\cdot||$ are the inner product and Euclidean norm in $\mathbb{R}^{d}$, respectively.

Moreover $p(t,x)$ has the following properties.

\begin{proposition}\label{prop1}
Let $p(t,x)$ be defined by \eqref{dtfd}.
\begin{itemize}
	\item [$(a)$] For each $t>0$,
		\begin{equation}\label{ppu}
			\int_{\mathbb{R}^d}p(t,x)dx = 1.
		\end{equation}
	\item [$(b)$]  For each $t$, $\tilde{t}>0$,
		\begin{equation}\label{spp}
			p(t+\tilde{t}) = p(t)\ast p(\tilde{t}).
		\end{equation}
	\item [$(c)$]  For each $\mu\geq1$ there exists a constant $c=c(\alpha,\mu)>0$ such that
	\begin{equation}
	||p(t)||_\mu \leq c\;t^{-\frac{d}{\alpha}\left(1-\frac{1}{\mu}\right)},\;\forall t>0.\label{dnpv}
	\end{equation}
	\item  [$(d)$] The function $t\mapsto p(t,\cdot)\in L_1(\mathbb{R}^d)$ is continuous.\label{pcec}
	\item [$(e)$]  The function $(t,x)\mapsto p(t,x)$ is in $C^\infty\left((0,\infty)\times\mathbb{R}^d\right)$.					\label{pdiip}
	
	\item [$(f)$]  For $t>0$ there exists a constant $c=c(\alpha)>0$ such that
		\begin{equation}\label{pdd}
			||\partial_xp(t)||_1 \leq c\;t^{-1/\alpha},
		\end{equation}	
		and
		\begin{equation}\label{pesd}
			||\partial_x^2p(t)||_1 \leq c\;t^{-2/\alpha}.
		\end{equation}
	\item [$(g)$]  For every $f\in L^1(\mathbb{R}^d)$, we have 
			$$\mathop{\lim}\limits_{t\to\infty}||p(t)\ast f - Mp(t)||_1 = 0,$$
		where $M=\int_{\mathbb{R}^d}f(x)dx$.
\end{itemize}
\end{proposition}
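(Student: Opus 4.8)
The plan is to base everything on the Fourier inversion formula and the resulting self-similarity of $p$. Since $z\mapsto e^{-t\|z\|^\alpha}$ is integrable for every $t>0$, the defining relation \eqref{dtfd} can be inverted to give the explicit representation
$$p(t,x) = (2\pi)^{-d}\int_{\mathbb{R}^d} e^{-iz\cdot x}e^{-t\|z\|^\alpha}dz.$$
From this, substituting $z = t^{-1/\alpha}w$, one reads off the scaling identity $p(t,x) = t^{-d/\alpha}p(1,t^{-1/\alpha}x)$, which reduces the quantitative statements to corresponding facts about the single profile $p(1,\cdot)$. First I would dispose of $(a)$ and $(b)$, which are purely Fourier-analytic: setting $z=0$ in \eqref{dtfd} gives $\int p(t,x)dx = e^{0}=1$, which is $(a)$; and for $(b)$ the convolution theorem gives $\widehat{p(t)\ast p(\tilde t)}(z) = e^{-t\|z\|^\alpha}e^{-\tilde t\|z\|^\alpha} = e^{-(t+\tilde t)\|z\|^\alpha} = \widehat{p(t+\tilde t)}(z)$, whence \eqref{spp} follows by injectivity of the Fourier transform.

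Next, $(c)$, $(e)$ and $(f)$ all follow from scaling once the base profile is understood. Differentiating the inversion formula under the integral sign -- legitimate because each spatial or temporal derivative introduces only polynomial factors in $z$ against the rapidly decaying $e^{-t\|z\|^\alpha}$ -- gives smoothness in $(t,x)$ for $t>0$, proving $(e)$. The finiteness of $\|p(1)\|_\mu$ for $\mu\ge1$ uses that $p(1,\cdot)$ is bounded (its Fourier transform is integrable) and decays like $\|x\|^{-(d+\alpha)}$ at infinity; likewise the first and second spatial derivatives of $p(1,\cdot)$ lie in $L^1(\mathbb{R}^d)$. With these in hand, differentiating the scaling identity in $x$ yields $\|\partial_x p(t)\|_1 = t^{-1/\alpha}\|\partial_x p(1)\|_1$ and $\|\partial_x^2 p(t)\|_1 = t^{-2/\alpha}\|\partial_x^2 p(1)\|_1$, which is $(f)$ with $c=c(\alpha)$, while the change of variables $y=t^{-1/\alpha}x$ gives $\|p(t)\|_\mu = t^{-\frac{d}{\alpha}(1-1/\mu)}\|p(1)\|_\mu$, which is $(c)$ with $c=c(\alpha,\mu)=\|p(1)\|_\mu$.

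Finally, $(d)$ follows from the inversion formula by dominated convergence, the integrand being dominated by $e^{-t_0\|z\|^\alpha}$ on a neighbourhood of any $t_0>0$. The main obstacle, and the only genuinely nontrivial step, is $(g)$. Here I would write $p(t)\ast f(x)-Mp(t,x) = \int_{\mathbb{R}^d}[p(t,x-y)-p(t,x)]f(y)\,dy$ and bound the $L^1$ norm by $\int_{\mathbb{R}^d}\|p(t,\cdot-y)-p(t,\cdot)\|_1\,|f(y)|\,dy$. Scaling turns the inner norm into $\|p(1,\cdot-t^{-1/\alpha}y)-p(1,\cdot)\|_1$, which tends to $0$ as $t\to\infty$ by continuity of translation in $L^1$; since this quantity is bounded by $2\|p(1)\|_1=2$ uniformly in $t$, a second application of dominated convergence (now in $y$, against the integrable weight $|f|$) finishes the argument. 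The delicate analytic input throughout is the asymptotic decay rate $\|x\|^{-(d+\alpha)}$ and the $L^1$ integrability of the derivatives of the stable density $p(1,\cdot)$, which is where any careful version of this proof must concentrate its effort; I would either invoke the classical estimates for stable densities or derive them directly from the oscillatory inversion integral.
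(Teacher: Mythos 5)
Your proposal is correct, but it takes a genuinely different route from the paper: the paper gives no argument at all for this proposition, simply delegating $(a)$, $(b)$, $(e)$, $(f)$ to \cite{D6V}, $(d)$ to \cite{DI}, $(c)$ to \cite{AV} and $(g)$ to \cite{JS}. You instead build a unified, essentially self-contained proof out of the Fourier inversion formula and the self-similarity $p(t,x)=t^{-d/\alpha}p(1,t^{-1/\alpha}x)$, which cleanly reduces $(c)$, $(f)$ and $(g)$ to properties of the single profile $p(1,\cdot)$; your argument for $(g)$ in particular (translation continuity in $L^1$ applied to $\|p(1,\cdot-t^{-1/\alpha}y)-p(1,\cdot)\|_1$, then dominated convergence against $|f|$) is the standard proof of the asymptotic-profile lemma and is sound. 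What this buys is transparency and uniform constants $c=\|p(1)\|_\mu$, $c=\|\partial_x p(1)\|_1$, etc.; what it costs is that all the real analytic work is concentrated in the unproved claims that $p(1,\cdot)$ is nonnegative with $\|x\|^{-(d+\alpha)}$ tails and that $\partial_x p(1),\partial_x^2 p(1)\in L^1(\mathbb{R}^d)$ --- you flag this honestly, but these facts are exactly the content of the results the paper cites, so in the end you too must either invoke the classical stable-density estimates or carry out the oscillatory-integral analysis. Two small points to tighten: in $(d)$, dominated convergence on the inversion integral only gives pointwise convergence $p(t,x)\to p(t_0,x)$; to upgrade to convergence in $L^1$ you should either invoke Scheff\'e's lemma (using $p\geq 0$ and $\int p(t,x)\,dx=1$) or exhibit a $t$-uniform integrable majorant near $t_0$. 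And in $(g)$ your bound $2\|p(1)\|_1=2$ silently uses nonnegativity of $p(1,\cdot)$ together with $(a)$, which is fine but worth saying, since nonnegativity comes from the probabilistic interpretation of $p$ as a transition density rather than from \eqref{dtfd} alone.
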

\begin{proof}
For the proof of $(a)$, $(b)$, $(e)$, and $(f)$ see \cite{D6V}. The statement $(d)$ is in \cite{DI}, $(c)$ and $(g)$ are proved in \cite{AV} and \cite{JS}, respectively. \hfill
\end{proof}

\vspace{0.5cm}
The stochastic process $X$ induces a strongly continuous semigroup $\{T_t:t\geq0\}$ whose infinitesimal generator is the closed operator $\Delta_\alpha$ (see \cite{EK}).

In what follows we denote by $B_r$ the ball of $\mathbb{R}^d$ centered at the origin and of radius $r>0$ and by $\mathcal{S}(\mathbb{R}^{d})$ the Schwartz space on $\mathbb{R}^d$.

An integral representation for $\Delta_\alpha$, $0<\alpha<2$, is given in \cite{DI}: For all $f\in \mathcal{S}(\mathbb{R}^d)$, all $x\in\mathbb{R}^d$ and all $r>0$,
\begin{equation}\label{dexpla}
	\begin{split}
		\Delta_\alpha f(x) = -c&\left(\int_{B_r}\frac{f(x+z)-f(x)-\nabla f(x)\cdot z}{||z||^{d+\alpha}}dz					\right.\\
		&\left.+\int_{\mathbb{R}^d\setminus B_r}\frac{f(x+z)-f(x)}{||z||^{d+\alpha}}dz\right),
	\end{split}
\end{equation}
where $c=c(\alpha,d)>0$ is a constant.

The expression \eqref{dexpla} for $\Delta_\alpha$ allows to define $\Delta_\alpha f\in C_b(\mathbb{R}^d)$ for each $f\in C_b^2(\mathbb{R}^d)$, moreover this extension is continuous, see \cite{DI}.

\begin{proposition}\label{prop2}
Let $\Delta_\alpha$, $0<\alpha<2$, be the $\alpha$-Laplacian with domain $C_b^2(\mathbb{R}^d)$.
\begin{itemize}
	\item [$(a)$] If $f\in C_b^2(\mathbb{R}^d)$ and $\tilde{x}$ is a global minimum of $f$, then
		\begin{equation}\label{pqpmg}
			\Delta_\alpha f(\tilde{x}) \leq 0.
		\end{equation}
	\item  [$(b)$] If $f\in C_b^2(\mathbb{R}^d)$, then
		\begin{equation}\label{ddct}
			\partial_t\left(p(t)\ast f(x)\right) = \Delta_\alpha\left(p(t)\ast f\right)(x),\;\forall t>0.
		\end{equation}	
\end{itemize}
\end{proposition}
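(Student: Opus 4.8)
The plan is to handle the two assertions separately, in both cases using the integral representation \eqref{dexpla} of $\Delta_\alpha$ as the central tool.

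For part $(a)$, I would evaluate \eqref{dexpla} at the global minimum $\tilde{x}$. Since $f$ is differentiable and every point of $\mathbb{R}^d$ is interior, $\tilde{x}$ is a critical point, so $\nabla f(\tilde{x})=0$ and the term $\nabla f(\tilde{x})\cdot z$ in the first integrand vanishes. Because $\tilde{x}$ is a \emph{global} minimum, $f(\tilde{x}+z)-f(\tilde{x})\geq0$ for every $z\in\mathbb{R}^d$, so both integrands in \eqref{dexpla} are non-negative. Hence the two integrals are $\geq0$, and since the prefactor is $-c$ with $c>0$ we obtain $\Delta_\alpha f(\tilde{x})\leq0$, which is \eqref{pqpmg}. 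The only point needing a word is that \eqref{dexpla}, stated for $f\in\mathcal{S}(\mathbb{R}^d)$, remains valid on $C_b^2(\mathbb{R}^d)$; this is exactly the continuous extension recalled before Proposition \ref{prop2}.

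For part $(b)$, write $g(t,x)=(p(t)\ast f)(x)$. The strategy is to show that $\partial_t g(t,\cdot)$ and $\Delta_\alpha g(t,\cdot)$ both equal the single function $p(t)\ast(\Delta_\alpha f)$. First I would check $g(t,\cdot)\in C_b^2(\mathbb{R}^d)$ for each $t>0$, so that $\Delta_\alpha g(t,\cdot)$ is meaningful: since $f\in C_b^2(\mathbb{R}^d)$ and $||p(t)||_1=1$ by \eqref{ppu}, one differentiates under the integral to get $\partial_x^j g(t)=p(t)\ast\partial_x^j f$ for $|j|\leq2$, all bounded and continuous. Next, the spatial commutation $\Delta_\alpha(p(t)\ast f)=p(t)\ast(\Delta_\alpha f)$ follows by inserting \eqref{dexpla} for $g(t,\cdot)$, noting that $g(t,x+z)-g(t,x)-\nabla_x g(t,x)\cdot z$ is the convolution against $p(t)$ of the increment $f(x+z-y)-f(x-y)-\nabla f(x-y)\cdot z$ in $y$, and then applying Fubini together with the translation invariance $\Delta_\alpha\!\big(f(\cdot-y)\big)(x)=(\Delta_\alpha f)(x-y)$. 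Finally, for the time derivative I would use that $\{p(t)\ast\,\cdot\,\}$ is the semigroup generated by $\Delta_\alpha$: for $f\in C_b^2(\mathbb{R}^d)$ the difference quotient $h^{-1}\big(p(h)\ast f-f\big)$ converges to $\Delta_\alpha f$, and combining this with the semigroup property \eqref{spp} and the contractivity of $p(t)\ast\,\cdot\,$ yields $\partial_t(p(t)\ast f)=p(t)\ast(\Delta_\alpha f)$. Equating the two expressions gives \eqref{ddct}.

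The main obstacle is the rigorous justification of these interchanges, not the formal identities. In the spatial commutation the kernel $||z||^{-d-\alpha}$ is non-integrable near the origin, so Fubini must be supported by the second-order Taylor estimate, which bounds $|f(x+z-y)-f(x-y)-\nabla f(x-y)\cdot z|$ by a constant times $||z||^2$ on $B_r$; this makes the inner integrand $O(||z||^{2-d-\alpha})$, integrable since $\alpha<2$, while the boundedness of $f$ on $\mathbb{R}^d\setminus B_r$ and the integrability of $p(t)$ handle the far field, producing an integrable dominating function in $(z,y)$. For the time derivative one must control the difference quotient uniformly in $x$, which is where the smoothness and decay bounds of Proposition \ref{prop1}$(e)$ and $(f)$, namely \eqref{pdd}--\eqref{pesd}, enter; once these uniform bounds are in place, dominated convergence delivers both identities and completes the proof.
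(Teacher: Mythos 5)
Your argument for part $(a)$ is complete and correct, and it is in substance what the paper outsources to Theorem 2 of \cite{DI}: at a global minimum of a $C_b^2$ function the gradient vanishes and both integrands in \eqref{dexpla} are non-negative, so the prefactor $-c$ forces $\Delta_\alpha f(\tilde{x})\leq 0$. For part $(b)$ your route is genuinely different from the paper's: the paper proves \eqref{ddct} for $f\in\mathcal{S}(\mathbb{R}^d)$ via \eqref{dexpla} (where the Fourier characterization \eqref{dtfd} makes the identity transparent) and then passes to $C_b^2(\mathbb{R}^d)$ by the density argument of Proposition 1 in \cite{DI}, exploiting the continuity of the extension of $\Delta_\alpha$ to $C_b^2$; you instead work on $C_b^2$ directly. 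Your spatial commutation $\Delta_\alpha(p(t)\ast f)=p(t)\ast\Delta_\alpha f$ is sound as justified: the second-order Taylor bound makes the near-origin integrand $O(\|z\|^{2-d-\alpha})$, integrable since $\alpha<2$, boundedness of $f$ handles $\mathbb{R}^d\setminus B_r$, and Fubini plus translation invariance finish it. This buys self-containedness at the price of redoing by hand what the density argument gets from the Schwartz case.

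The one step you cannot take for granted is the temporal one. You invoke the fact that $\{p(t)\ast\cdot\}$ is the semigroup generated by $\Delta_\alpha$ to conclude $h^{-1}\left(p(h)\ast f-f\right)\to\Delta_\alpha f$ for $f\in C_b^2(\mathbb{R}^d)$. The generator statement recalled in the paper (from \cite{EK}) concerns the strongly continuous semigroup on a space such as $C_0(\mathbb{R}^d)$ or $L^1(\mathbb{R}^d)$, and $C_b^2(\mathbb{R}^d)\not\subset C_0(\mathbb{R}^d)$; moreover $\Delta_\alpha$ on $C_b^2$ is \emph{defined} through the extension of \eqref{dexpla}, so identifying the difference-quotient limit with that integral expression is precisely the analytic content of \eqref{ddct} as $t\downarrow0$ and is here being assumed rather than proved. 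To close this, either prove the uniform convergence of $h^{-1}\left(p(h)\ast f-f\right)$ to the right-hand side of \eqref{dexpla} directly (split $p(h,y)\,dy$ near and away from the origin and use the Taylor estimate, as in \cite{D6V}), or fall back on the paper's Schwartz-plus-density scheme for this step. Once that limit is secured, your use of the semigroup property \eqref{spp} together with the contraction $\|p(t)\ast g\|_u\leq\|g\|_u$ to propagate the derivative from $t=0$ to all $t>0$ is fine.
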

\begin{proof}
The statement $(a)$ is a trivial consequence of Theorem 2 in \cite{DI}. The formula \eqref{ddct} is proved in \cite{D6V} when $d=1$. When $f\in S(\mathbb{R}^d)$ the expression \eqref{dexpla} implies \eqref{ddct}, the general case is proved using a density argument (as in the proof of Proposition 1 in \cite{DI}).\hfill
\end{proof}

\section{Proof of the results}\label{secprofr}

By $B([0,T]\times\mathbb{R}^d)$ we denote the space of all real-valued bounded measurable functions defined on $[0,T]\times\mathbb{R}^d$. It is a Banch space with the norm	
$$|||u|||=\text{sup}\left\{|u(t,x)|:(t,x)\in[0,T]\times\mathbb{R}^d\right\}.$$
The subspace of continuous bounded functions will be denoted by $C_b([0,T]\times\mathbb{R}^d)$. In particular, if $u\in C_{b}(\mathbb{R}^{d})$ we denote, the uniform norm, by $||u||_{u}$.

In what follows we denote by $c$ a positive constant whose specific value is unimportant and it may change from place to place. Given the continuous functions $h,k:[0,\infty) \rightarrow [0,\infty)$, for each $t\geq 0$ we set 
$$H(t) = \int_{0}^t h(r)dr,\enspace K(t) = \int_{0}^t k(r)dr.$$
	
The Duhamel equation associated to \eqref{edps} is
\begin{equation}\label{eqint}
	u(t,x) = p(K(t))\ast u_0(x) - \int_0^t h(s)p(K(s,t))\ast\varphi(u(s))(x)ds,
\end{equation}
where
	$$K(s,t) = \int_s^t k(r)dr, \enspace 0<s<t.$$
	
\begin{proof}[Proof of Theorem \ref{main}]
The proof will be given in several steps.

\vspace{0.3cm}
{\it Step one} (existence of a local mild solution): Let us define the function $F:B([0,T]\times\mathbb{R}^d)\to B([0,T]\times\mathbb{R}^d)$, as
	$$(Fu)(t,x) = p(K(t))\ast u_0(x) - \int_0^th(s)p(K(s,t))\ast\varphi(u(s))(x)ds.$$
If $u\in\left\{v\in B\left([0,T]\times\mathbb{R}^d\right):|||v|||\leq R\right\}$, $R>0$, then \eqref{ppu}	implies
\begin{align*}
	|(Fu)(t,x)| &\leq \int_{\mathbb{R}^d} p(K(t),y-x)||u_0||_{u}dy + \int_0^t h(s) \int_{\mathbb{R}^d}					p(K(s,t),y-x)\varphi(|||u|||)dyds\\
	&\leq ||u_0||_{u} + \varphi(R)\int_0^t h(s)ds.
\end{align*}
Therefore
\begin{equation}\label{fenbr}
	|||Fu||| \leq ||u_0||_{u} + \varphi(R)H(T).	
\end{equation}
	
On the other hand, if $u,\tilde{u}\in\left\{v\in B\left([0,T]\times\mathbb{R}^d\right):|||v|||\leq R\right\}$, then
	$$|(Fu)(t,x)-(F\tilde{u})(t,x)| \leq \int_0^t h(s)p(K(s,t))\ast|\varphi(u(s))-\varphi(\tilde{u}					(s))|(x)ds.$$
			
The convexity of $\varphi$ implies (see Theorem 14.5 in \cite{Y})
	$$\frac{\varphi(b)-\varphi(a)}{b-a} \leq (D_l\varphi)(b), \text{ if }a<b,$$
where $D_l$ is the left hand side derivative of $\varphi$. Using this and $u(s,x),\tilde{u}(s,x)\leq R$, we get
	$$|\varphi(u(s,x))-\varphi(\tilde{u}(s,x))| \leq (D_l\varphi)(R)|u(s,x)-\tilde{u}(s,x)|.$$
Hence
\begin{equation}\label{fesco}
	|||Fu-F\tilde{u}||| \leq (D_l\varphi)(R)H(T)|||u-\tilde{u}|||.
\end{equation}

If we take $R^\star = ||u_0||_{u}+1$ and
\begin{equation}
T^\star < H^{-1}\left(\frac{1}{1+\varphi(R^\star)+(D_l\varphi)(R^\star)}\right),
\end{equation}
we see that \eqref{fenbr} and \eqref{fesco} implies the hypotheses of the Banach contraction principle. Therefore there exists a unique solution $u\in B([0,T]\times\mathbb{R}^d)$ of (\ref{eqint}).
	
\vspace{0.3cm}		
{\it Step two} (continuity of $u$): Now we will see $u_0\in C_b(\mathbb{R}^d)$ implies $u\in C_b((0,T^\star]\times\mathbb{R}^d)$. The first thing we are going to do is proving that $u$ is continuous uniformly in $x$. To this end we use the integral representation \eqref{eqint} of $u$. The continuity in $t$ uniformly in $x$ of the first term in the right hand side of equality \eqref{eqint} is an immediate consequence of \eqref{pcec}. Now let us study the function
\begin{equation}\label{sfia}
	(t,x) \mapsto \int_0^t h(s)p(K(s,t))\ast\varphi(u(s))(x)ds.
\end{equation}
Choose an arbitrary and fix point $\tilde{t}\in[0,T^\star]$. Let $\eta\in\mathbb{R}$ be such that $0\leq\tilde{t}+\eta\leq T^\star$,
\begin{align}
	& \left|\int_0^{\tilde{t}+\eta} h(s)p(K(s,\tilde{t}+\eta)\ast\varphi(u(s))(x)ds - \int_0^{\tilde{t}}			 h(s)p(K(s,\tilde{t}))\ast\varphi(u(s))(x)ds\right|\nonumber\\
	\leq & \left|\int_0^{\tilde{t}+\eta} h(s)p(K(s,\tilde{t}+\eta))\ast\varphi(u(s))(x)ds -						\int_0^{\tilde{t}+\eta} h(s)p(K(s,\tilde{t}))\ast\varphi(u(s))(x)ds\right|\nonumber\\
	&+\left|\int_0^{\tilde{t}+\eta} h(s)p(K(s,\tilde{t}))\ast\varphi(u(s))(x)ds -\int_0^{\tilde{t}} 				h(s)p(K(s,\tilde{t}))\ast\varphi(u(s))(x)ds\right|\nonumber\\
	\leq & \varphi(|||u|||)\left\{\int_{\tilde{t}-|\eta|}^{\tilde{t}+|\eta|} h(s)ds + \int_0^T h(s)||			p(K(s,\tilde{t}+\eta))-p(K(s,\tilde{t}))||_1ds\right\}.\label{cuets}
\end{align}
Using $(d)$ of Proposition \ref{prop1} we deduce  
	$$t \mapsto p(t,\cdot)\in L^1(\mathbb{R}^d)$$
is uniformly continuous on $[0,K(T^\star)]$. This implies the right hand side of \eqref{cuets} goes to $0$, when $\eta\to 0$. Hence we have the continuity of $u$ uniformly in $x$.
		
Fix an arbitrary point $(\tilde{t},\tilde{x})\in(0,T^\star]\times\mathbb{R}^d$. The properties of the convolution operator implies
	$$x \mapsto p(K(\tilde{t}))\ast u_0(x),$$
is a continuous function, and from the dominated convergence theorem we deduce that the function
	$$x \mapsto \int_0^{\tilde{t}} h(s)p(K(s,\tilde{t}))\ast\varphi(u(s))(x)ds$$
is continuous. From \eqref{eqint} we get
\begin{eqnarray*}
	&&\hspace{-0.8cm}|u(t,x)-u(\tilde{t},\tilde{x})|\\
	&\leq& p(K(t))\ast u_0(x)-p(K(\tilde{t}))\ast u_0(x)|\\
&&+\left| \int_0^t h(s)p(K(s,t))\ast\varphi(u(s))(x)ds - \int_0^{\tilde{t}} h(s)p(K(s,\tilde{t}))\ast			\varphi(u(s))(x)ds\right| \\
	&&+|p(K(\tilde{t}))\ast u_0(x)-p(K(\tilde{t}))\ast u_0(\tilde{x})|\\
	&&+\left| \int_0^{\tilde{t}} h(s)p(K(s,\tilde{t}))\ast\varphi(u(s))(x)ds - \int_0^{\tilde{t}} 					h(s)p(K(s,\tilde{t}))\ast\varphi(u(s))(\tilde{x})ds\right|.
\end{eqnarray*}

The previous observation and the continuity of $u$ in $t$ uniformly in $x$ implies that the right hand side of the above inequality goes to $0$, when $(t,x)\to(\tilde{t},\tilde{x})$.

\vspace{0.3cm}
{\it Step three} (spatial regularity of $u$): If $u_0\in C_b^2(\mathbb{R}^d)$ we will see that $u(t)\in C_b^2(\mathbb{R}^d)$, $t>0$. If $u$ were differentiable we would have, by \eqref{eqint},
	$$\partial_x u(t,x) = \partial_xp(K(t))\ast u_0(x)-\partial_x \int_0^t h(s)p(K(s,t))\ast\varphi(u(s))			(x)ds.$$
We are going to see that each term, in the above expression, is in fact differentiable. The differentiability of $p(K(t))\ast u_0(\cdot)$ follows from $(e)$ of Proposition \ref{prop1} and Lema 2 in \cite{DI}. To prove the differentiability of the second term we consider the function			
	$$x \mapsto \int_0^{t-\delta} h(s)p(K(s,t))\ast\varphi(u(s))(x)ds,$$
where $\delta>0$. The convolution operator properties and \eqref{pdd} yields
	$$\left|\partial_x\left(p(K(s,t))\ast\varphi(u(s))\right)(x)\right| \leq \varphi(|||u|||)
		K(t-\delta,t)^{1/\alpha}.$$
Therefore, see Theorem 2.27 in \cite{Folland},
\begin{equation}\label{igai}
	\partial_x\int_0^{t-\delta}h(s)p(K(s,t))\ast\varphi(u(s))(x)ds = \int_0^{t-\delta} h(s)						(\partial_xp(K(s,t)))\ast\varphi(u(s))(x)ds.		
\end{equation}
Using a classical result on uniform convergence (see Theorem 7.17 in \cite{Rudin}) we have the right hand side of \eqref{igai} converges uniformly on $\mathbb{R}^d$ to the function
	$$x \mapsto \int_0^t (\partial_x\left(p(K(s,t)))\right)\ast\varphi(u(s))(x)ds.$$
The property \eqref{pdd} of $p$ implies that such function is well defined because our hypotheses turns out
	$$\int_0^t h(s)K(s,t)^{-1/\alpha}ds < \infty, \enspace t>0.$$
Inasmuch as
\begin{align*}
	\partial_x\int_0^t h(s)p(K(s,t))\ast\varphi(u(s))(x)ds &= \mathop{\lim}\limits_{\delta\downarrow0}				\partial_x\int_0^{t-\delta} h(s)p(K(s,t))\ast\varphi(u(s))(x)ds\\
	&= \mathop{\lim}\limits_{\delta\downarrow0} \int_0^{t-\delta} h(s)(\partial_xp(K(s,t)))\ast					\varphi(u(s))(x)ds\\
	&= \int_0^t h(s)(\partial_xp(K(s,t)))\ast\varphi(u(s))(x)ds.
\end{align*}

In this way we get, for $0<t\leq T^\star$,
\begin{equation}\label{eipdp}
	\partial_xu(t,x) = (\partial_xp(K(t)))\ast u_0(x) - \int_0^t h(s)(\partial_xp(K(s,t)))\ast					\varphi(u(s))(x)ds,
\end{equation}
with
	$$|||\partial_xu(t)||| \leq c||u_0||_{u}K(t)^{-1/\alpha} + c\varphi(|||u|||)\int_0^t h(s)K(s,t)^{-1/				\alpha}ds.$$

To see that $u(t)\in C_b^2(\mathbb{R}^d)$ we take a $\tilde{t}>0$. Using \eqref{eqint} and \eqref{spp} we have, for $t>\tilde{t}$,
\begin{equation}\label{eqintad}
	u(t,x) = p(K(\tilde{t},t))\ast u(\tilde{t})(x) - \int_{\tilde{t}}^t h(s)p(K(s,t))\ast\varphi(u(s))			(x)ds.
\end{equation}
Proceeding as in the deduction of \eqref{eipdp} we obtain
\begin{equation}\label{epsdp}
	\partial_x^2u(t,x) = \left(\partial_xp(K(\tilde{t},t))\right)\ast\left(\partial_xu(\tilde{t})\right)			(x) - \int_{\tilde{t}}^t h(s)\left(\partial_xp(K(s,t))\right)\ast\varphi'(u(s))\partial_xu(s)				(x)ds.
\end{equation}
The second term on the right hand side is well defined because it is bounded by
\begin{align*}
	c\int_{\tilde{t}}^t &h(s)K(s,t)^{-1/\alpha}\varphi'(|||u|||)\left(K(s)^{-1/\alpha} + \int_0^s 					h(r)K(r,s)^{-1/\alpha}dr\right)dr\\
	&\leq c\left\{K(\tilde{t})\mathop{\sup}\limits_{t\in[0,T^\star]} \int_0^t h(s)K(s,t)^{-1/\alpha}ds					+ \left(\mathop{\sup}\limits_{t\in[0,T^\star]}\int_0^t h(s)K(s,t)^{-1/\alpha}ds
				 \right)^2\right\}.
\end{align*}
Due to $\tilde{t}>0$ is arbitrary we get $u(t)\in C_b^2(\mathbb{R}^d),\; t>0$.
		
Using \eqref{pesd} we can see from \eqref{epsdp} that the differentiability of $\varphi$ can be replaced by the hypothesis
	$$\mathop{\sup}\limits_{t\in[0,T^\star]} \int_0^t h(s)K(s,t)^{-2/\alpha}ds < 0,\enspace \forall t>0.$$

\vspace{0.3cm}				
{\it Step four} (temporal regularity of $u$): We just indicate the main steps, in Section 5.2.2 of \cite{D6V} a detailed proof for a similar case is given. The temporal differentiability of $u$ follows from the temporal differentiability of each term in the right hand side of the equality in \eqref{eqint}. Using \eqref{ddct} the differentiability of the first term is given by
	$$\partial_tp(K(t))\ast u_0(x) = k(t)\Delta_\alpha(p(K(t))\ast u_0)(x).$$

To see the differentiability of the second term we proceed as in the spatial case. Applying the Leibniz's rule, for $\delta>0$, we obtain
\begin{eqnarray}
	\partial_t\int_0^{t-\delta} h(s)p(K(s,t))\ast\varphi(u(s))(x)ds = \int_0^{t-\delta} h(s)\partial_t\left(p(K(s,t))\ast\varphi(u(s))\right)(x)ds	\nonumber \\
		+ h(t-\delta)p(K(t-\delta,t))\ast\varphi(u(t-s))(x).		\label{edrlei}
\end{eqnarray}

Since $\Delta_\alpha$ is a closed operator (it is the infinitesimal generator of a $C_0$ semigroup)
	$$\int_0^{t-\delta} h(s)\partial_t(p(K(s,t))\ast\varphi(u(s)))(x)ds = \Delta_\alpha\left(k(t)					\int_0^{t-\delta} h(s)p(K(s,t))\ast\varphi(u(s))ds\right)(x).$$
	What we have done in the spatial regularity allow us to apply Proposition 1 in \cite{DI}, therefore
	$$\int_0^{t-\delta} h(s)\partial_t(p(K(s,t))\ast\varphi(u(s)))(x)ds$$
converges, as $\delta\to0$, to
	$$\Delta_\alpha\left(k(t)\int_0^t h(s)p(K(s,t))\ast\varphi(u(s))ds\right)(x).$$

The convergence of the second term in the right hand side of \eqref{edrlei} follows from the following estimation
\begin{eqnarray}
&&	|p(K(t-\delta,t))\ast\varphi(u(t-\delta))(x) - \varphi(u(t))(x)| \nonumber \\
&&	\leq	H'(|||u|||)||u(t-\delta)-u(t)||_u 	+ |p(K(t-\delta,t))\ast\varphi(u(t))-\varphi(u(t))|(x).\label{epct}
\end{eqnarray}
In fact, the first term, in above inequality, goes to $0$, when $\delta\to0$, because $u$ is continuous in $t>0$ uniformly in $x$. On the other hand, since $\varphi(u(t))\in C_b(\mathbb{R}^d)$ then the convolution operator properties implies that also the second term in \eqref{epct} goes to $0$, when $\delta\to0$ (in this case $K(t-\delta,t)\to0$).
		
Therefore, using \eqref{eqint} and the linearity of $\Delta_\alpha$ we obtain
\begin{align*}
	\partial_tu(t,x) =& \;k(t)\Delta_\alpha(p(K(t))\ast u_0)(x)\\
	&-k(t)\Delta_\alpha\left(\int_0^t h(s)p(K(s,t))\ast\varphi(u(s))ds\right)(x)-h(t)\varphi(u(t,x))\\
	=& \;k(t)\Delta_\alpha u(t)(x)-h(t)\varphi(u(t,x)).
\end{align*}

\vspace{0.3cm}	
{\it Step five} ($u$ is non-negative): Let us define the function $g:[0,T^\star]\to\mathbb{R}$ as
	$$g(t) = \mathop{\inf}\limits_{x\in\mathbb{R}^d} u(t,x).$$
The function $g$ is well defined because $u(t)\in C_b(\mathbb{R}^d)$, for each $t\in[0,T^{\star}]$. The continuity of $u$ in $t$ uniformly in $x$ implies the continuity of $g$. Hence there exists a $\tilde{t}\in[0,T^\star]$ such that
			$$g(\tilde{t}) = \mathop{\inf}\limits_{t\in[0,T^\star]} g(t).$$

The convexity of $\varphi$ and $\varphi(0)=0$ implies
\begin{equation}\label{cpfc}
	\varphi(t) \leq \frac{\varphi(R)+\varphi(-R)}{R}|z|,\; \forall z\in[-R,R].
\end{equation}

Using \eqref{eqint} and \eqref{cpfc} we obtain
	$$||u(t)||_1 \leq ||u_0||_1 + c\int_0^t h(s)||u(s)||_1ds,\; 0\leq t\leq T^\star.$$
Gronwall's lemma turns out
	$$||u(t)||_1 \leq ||u_0||_1\left(1+c\;H(T^\star)\text{ exp}\left\{cT^\star\mathop{\max}\limits_{t			\in[0,T^\star]}h(t)\right\}\right),\; 0\leq t\leq T^\star.$$
Hence \eqref{cpfc} implies
	$$||\varphi(u(t))||_1 \leq c||u(t)||_1 \leq c ||u_0||_1, \;0\leq t\leq T^\star.$$
From this we deduce
\begin{equation}\label{devan}
	\mathop{\limsup}\limits_{||x||\to\infty}\varphi(u(t,y-x)) = \mathop{\limsup}\limits_{||x||\to\infty}			\varphi(u(t,x)) = 0, \;\forall(t,y)\in[0,T^\star]\times\mathbb{R}^d.
\end{equation}
Moreover, for each $x,y\in\mathbb{R}^d,\; 0\leq s\leq\tilde{t}$, we have
	$$h(s)p(K(s,\tilde{t}),y)\varphi(u(s,y-x)) \leq p(K(s,\tilde{t}),y)\varphi(|||u|||)\mathop{\max}				\limits_{s\in[0,T^\star]}h(s).$$
The integrability of $p(K(s,\tilde{t}))$ allows us to use the dominated convergence theorem, then \eqref{devan} and \eqref{eqint} implies 
\begin{equation}\label{dppde}
	\mathop{\lim}\limits_{||x||\to\infty}|u(\tilde{t},x)|=0.
\end{equation}

Let us suppose that $g(\tilde{t})<0$ and $\tilde{t}>0$. By \eqref{dppde} there exists a $M>0$ such that
\begin{equation}\label{accdu}
	|u(\tilde{t},x)| < \frac{|g(\tilde{t})|}{3},\;\forall||x||>M.
\end{equation}
Since $u(\tilde{t})\in C_b(\mathbb{R}^d)$ there exists a $\tilde{x}\in\mathbb{R}^d$, $||\tilde{x}||\leq M$, such that
	$$u(\tilde{t},\tilde{x}) = \mathop{\inf}\limits_{||x||\leq M}u(\tilde{t},x) =
		\mathop{\inf}\limits_{x\in\mathbb{R}^d}u(\tilde{t},x) = g(\tilde{t}) = 
		\mathop{\inf}\limits_{(t,x)\in[0,T^\star]\times\mathbb{R}^d}u(t,x),$$
where the second equality is due to \eqref{accdu}. This implies $\partial_tu(\tilde{t},\tilde{x})=0$ and $\Delta_\alpha u(\tilde{t},\tilde{x})\leq0$, where we used \eqref{pqpmg}. Then \eqref{edps} implies
$$h(\tilde{t})\varphi(u(\tilde{t},\tilde{x})) = k(\tilde{t})\Delta_\alpha u(\tilde{t},\tilde{x})\leq 0.$$
Therefore $g(\tilde{t})=0$ or $\tilde{t}=0$. Such contradiction implies $u(t,x)\geq0$, for each $(t,x)\in[0,T^\star]\times\mathbb{R}^d$.
	
\vspace{0.3cm}		
{\it Step six} (global existence): The global existence will be proved using the common technique of extend the local solution (see for example the proof of Theorem A in \cite{AB}). By $u_1$ we denote the solution of \eqref{edps} obtained in step one. Let us consider the equation \eqref{edps} with initial condition $u_1(T^\star)$. Since $u_1(T^\star)\geq0$ we deduce from \eqref{eqint} that $||u_1(T^\star)||_u\leq||u_0||_u$. Repeating the step one we obtain a solution $v_2\in	B\left([0,T^{\star\star}]\times\mathbb{R}^d\right)$ of \eqref{eqint} with initial condition $u_1(T^\star)$ and $|||v_2|||\leq R^{\star\star}$, where $R^{\star\star}=||u_1(T^\star)||_u+1$ and
	$$T^{\star\star} < H^{-1}\left(\frac{1}{1+\varphi(R^{\star\star})+(D_l\varphi)(R^{\star\star})}				\right).$$

Inasmuch as $H$ and $\varphi$ are increasing in $(0,\infty)$, and $R^{\star\star}\leq R^\star$, then we	can take $T^{\star\star}=T^\star$. Now combining $u_1$ and $v_2$ we define, for each $x\in\mathbb{R}^d$,
	$$u_2(t,x) = \left\{
		\begin{array}{ll}
			u_1(t,x),& 0\leq t\leq T^\star,\\
		\\ v_2(t-T^\star,x),& T^\star\leq t\leq 2T^\star.
		\end{array}
		\right.$$

The property \eqref{eqintad} of the integral equation \eqref{eqint} implies that $u_2$ is the unique solution of \eqref{eqint} on $[0,2T^\star]$, with initial condition $u_0$. In this way we can apply the steps two-five to obtain the unique non-negative solution of \eqref{edps} on $[0,2T^\star]$. Proceeding inductively we get the global solution to \eqref{edps}. \hfill
\end{proof}

\vspace{0.5cm}
\begin{proof}[Proof of Theorem \ref{pcasi}]
Integrating the equation \eqref{eqint} with respect to $x$ and using \eqref{ppu} we obtain
\begin{equation}\label{itpm}
	\int_{\mathbb{R}^d}u(t,x)dx = \int_{\mathbb{R}^d}u_0(y)dy - \int_0^th(s)\int_{\mathbb{R}^d} 					\varphi(u(s,y))dy\;ds.
\end{equation}

Since $u\geq0$, then $|||u|||\leq||u_0||_u$, therefore \eqref{itpm} implies the function $||u(t)||_1=M(t), t\geq0$, is bounded and monotone decreasing, hence $M(\infty)=\mathop{\lim}_{t\to\infty}M(t)$ exists.

The convexity of $\varphi$ produces
	$$\varphi(x) \leq\frac{\varphi(||u_0||_u)}{||u_0||_u}x,\;\forall x\in[0,||u_o||_u],$$
thus \eqref{itpm} yields
	$$M(t) \geq ||u_0||_1 - \frac{\varphi(||u_0||_u)}{||u_0||_u}\int_0^t h(s)M(s)ds.$$\vspace{-1em}

Let us denote by $y$ the solution of the ordinary differential equation
\begin{equation}\label{edoaux}
	\begin{split}
		\frac{d}{dt}y(t) &= -\frac{\varphi(||u_0||_u)}{||u_0||_u}h(t)y(t),\\
		y(0) &= ||u_0||_1.
	\end{split}
\end{equation}

The comparison theorem for ordinary differential equations implies
	$$M(t) \geq y(t),\;\forall t\geq0.$$
	
Solving \eqref{edoaux} we get \eqref{epmsp} and the last statement of the result is an immediate consequence of \eqref{epmsp}.\hfill
\end{proof}

\vspace{0.5cm}
\begin{proof}[Proof of Theorem \ref{tcpa}]
The proof will be divided in two cases.

\vspace{0.3cm}
{\it Case $p=1$}: Let us take $t>\tilde{t}$, Minkowski inequality yields
\begin{eqnarray}
		||u(t)-M(\infty)p(K(t))||_1 &\leq& ||u(t)-p(K(\tilde{t},t))\ast u(\tilde{t})||_1 \nonumber\\
		&&+ ||p(K(\tilde{t},t)\ast u(\tilde{t}) - M(\tilde{t})p(K(\tilde{t},t))||_1 \nonumber\\
		&&+ ||M(\tilde{t})p(K(\tilde{t},t) - M(\tilde{t})p(K(t))||_1 \nonumber\\
		&&+ ||M(\tilde{t})p(K(t)) - M(\infty)p(K(t))||_1.\label{depu}
	\end{eqnarray}

From \eqref{itpm} and \eqref{eqintad} we have
\begin{equation}\label{epfco}
	\int_{\tilde{t}}^t h(s)||\varphi(u(s))||_1ds \leq ||u(\tilde{t})||_1 \leq ||u_0||_1,\;\forall t>				\tilde{t},
\end{equation}
this and \eqref{eqintad} implies
	$$\mathop{\limsup}\limits_{t\to\infty}||u(t)-p(K(\tilde{t},t))\ast u(\tilde{t})||_1 \leq 						\int_{\tilde{t}}^\infty h(s)||\varphi(u(s))||_1ds.$$

Given that $\int_1^\infty k(s)ds<\infty$ we can apply (g) in Proposition \ref{prop1} to deduce
	$$\mathop{\lim}\limits_{t\to\infty}||p(K(\tilde{t},t))\ast u(\tilde{t}) - M(\tilde{t}) 						p(K(\tilde{t},t))||_1 = 0,\;\forall \tilde{t}>0.$$

Using again (g) of Proposition \ref{prop1} and \eqref{spp} we have
\begin{align*}
	\mathop{\lim}\limits_{t\to\infty}&||M(\tilde{t})p(K(\tilde{t},t)) - M(\tilde{t})p(K(t))||_1\\
	&= M(\tilde{t})\;\mathop{\lim}\limits_{t\to\infty}\left\|p(K(\tilde{t},t))\ast p(K(\tilde{t})) - 				p(K(\tilde{t},t))\int_{\mathbb{R}^d}p(K(\tilde{t}),x)dx\right\|_1 = 0.
\end{align*}
In this way from \eqref{depu} we get
\begin{eqnarray*}
	\mathop{\limsup}\limits_{\tilde{t}\to\infty}\;\mathop{\limsup}\limits_{t\to\infty} ||u(t)-						M(\infty)p(K(t))||_1 &\leq& \mathop{\limsup}\limits_{\tilde{t}\to\infty}\int_{\tilde{t}}^\infty 				h(s)||\varphi(u(s))||_1ds\\
	& & +\mathop{\limsup}\limits_{\tilde{t}\to\infty}|M(\tilde{t})-M(\infty)|=0,
\end{eqnarray*}
where we have used \eqref{epfco} and Theorem \ref{pcasi}.

\vspace{0.3cm}
{\it Case $p>1$}: H\"older's inequality brings about
\begin{equation}\label{xpqn}
	||u(t)-M(\infty)p(K(t))||_p \leq \left\|u(t)-M(\infty)p(K(t))\right\|_1^{1/2p}\left\|u(t)-						M(\infty)p(K(t))\right\|_{2p-1}^{1-1/2p}.
\end{equation}
The Minkowski's inequality and the elementary inequality
	$$(a+b)^\delta \leq 2^\delta(a^\delta+b\delta), \;\forall\delta,a,b\geq0,$$
can be used to obtain
\begin{equation}\label{depp}
	||u(t)-M(\infty)p(K(t))||_{2p-1}^{1-1/2p} \leq 2^{1-1/2p}\left(||u(t)||_{2p-1}^{1-1/2p}+						||M(\infty)p(K(t))||_{2p-1}^{1-1/2p}\right).
\end{equation}
From \eqref{eqint} we see
\begin{equation}\label{ceye}
	0\leq u(t,x)\leq p(K(t))\ast u_0(x),\;\forall(t,x)\in(0,\infty)\times\mathbb{R}^d,
\end{equation}
thus Young's inequality turns out
	$$||u(t)||_{2p-1} \leq ||u_0||_1||p(K(t))||_{2p-1}.$$
Substituting this estimation in \eqref{depp} and using \eqref{dnpv} we get
\begin{equation}\label{eeps}
	||u(t)-M(\infty)p(K(t))||_{2p-1}^{1-1/2p} \leq cK(t)^{-\frac{d}{\alpha}\left(1-\frac{1}{p}\right)},\;			\forall t>0.
\end{equation}
Substituting \eqref{eeps} in \eqref{xpqn} we see that the result is consequence of the case $p=1$. \hfill
\end{proof}

\vspace{0.5cm}
The following result will be fundamental on the study of the asymptotic behavior of the solutions of \eqref{edps}.

\begin{theorem}\label{cpth} (Comparison).
We assume the hypotheses of Theorem \ref{main}. Moreover we assume $u_0$, $v_0\in C_b(\mathbb{R}^d)\cap  L_1(\mathbb{R}^d)$. If $u$ and $v$ are solutions of
	$$\partial_tu = k(t)\Delta_\alpha u(t) - h(t)\varphi(u(t)),\; u(0)=u_0,$$
and
	$$\partial_tv = k(t)\Delta_\alpha v(t) - h(t)\varphi(v(t)),\; v(0)=v_0,$$
respectively, and $v_0\geq u_0 \geq 0$, then $v\geq u\geq0$.
\end{theorem}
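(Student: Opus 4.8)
The plan is to set $w=v-u$ and prove $w\geq 0$; the inequality $u\geq 0$ is already supplied by Theorem \ref{main} (since $u_0\geq 0$), and likewise $v\geq 0$ (since $v_0\geq 0$), so both solutions exist, are classical and bounded, and the only new content is $v\geq u$. Subtracting the two equations, $w$ is a classical bounded solution of
$$\partial_t w = k(t)\Delta_\alpha w - h(t)\bigl(\varphi(v)-\varphi(u)\bigr),\qquad w(0)=v_0-u_0\geq 0.$$
Because $\varphi$ is convex and differentiable with $\varphi(0)=0$ and $\varphi\geq 0$, it is nondecreasing on $[0,\infty)$; as $u,v\geq 0$, a pointwise mean value argument gives $\varphi(v)-\varphi(u)=\varphi'(\xi)\,w$ for an intermediate value $\xi=\xi(t,x)\in[0,R]$, where $R=||v_0||_u$ bounds both solutions (from \eqref{eqint}, $0\leq u\leq p(K(t))\ast u_0\leq ||u_0||_u$, and similarly for $v$). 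Thus $w$ solves the linear equation $\partial_t w=k\,\Delta_\alpha w+c\,w$ with bounded coefficient $c=-h\varphi'(\xi)\leq 0$.

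The obstacle is that this sign of $c$ is exactly the unfavorable one for the minimum principle of Step five: at a negative spatial minimum the reaction contribution $c\,w\geq 0$ does not clash with $k\Delta_\alpha w\leq 0$ (Proposition \ref{prop2}(a)), so the two terms may merely cancel and no contradiction arises directly. To repair this I would pass to $z=e^{\lambda t}w$, which has the same sign as $w$ and, using $\Delta_\alpha(e^{\lambda t}w)=e^{\lambda t}\Delta_\alpha w$, solves
$$\partial_t z = k\,\Delta_\alpha z + d\,z,\qquad d=\lambda-h\,\varphi'(\xi).$$
Choosing $\lambda>\sup h\varphi'(\xi)$, which is finite since $h$ is bounded on compact time intervals and $\varphi'$ is continuous hence bounded on $[0,R]$, forces $d>0$.

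Then I would run the minimum principle exactly as in Step five of Theorem \ref{main}. Fix $T>0$ and suppose $\inf_{[0,T]\times\mathbb{R}^d}z<0$. As in \eqref{dppde}, both $u(t,\cdot)$ and $v(t,\cdot)$ vanish at spatial infinity, hence so does $z(t,\cdot)$; together with continuity and $z(0)\geq 0$ this localizes the infimum at an interior global minimum $(\tilde t,\tilde x)$ with $\tilde t>0$ and $z(\tilde t,\tilde x)<0$. There $\partial_t z(\tilde t,\tilde x)=0$ and $\Delta_\alpha z(\tilde t,\tilde x)\leq 0$ by Proposition \ref{prop2}(a), so the equation yields $d(\tilde t,\tilde x)\,z(\tilde t,\tilde x)=-k(\tilde t)\Delta_\alpha z(\tilde t,\tilde x)\geq 0$; but $d>0$ and $z(\tilde t,\tilde x)<0$ make the left side strictly negative, a contradiction. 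Hence $z\geq 0$, i.e.\ $w\geq 0$, on $[0,T]$ for every $T$, giving $v\geq u$. The boundary case $\tilde t=T$ is treated as in Step five (or by first establishing the claim on $[0,T-\eta]$ for every $\eta>0$). I expect the sign bookkeeping of the reaction term, and the accompanying exponential substitution needed to make the minimum principle bite, to be the only genuinely delicate point; the remainder is a direct transcription of the positivity argument already carried out for a single solution.
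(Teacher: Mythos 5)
Your proposal is correct and follows essentially the same route as the paper: linearize via the mean value theorem to get $\partial_t w=k\Delta_\alpha w-h\varphi'(\xi)w$, multiply by a growing exponential (your $e^{\lambda t}$ is the paper's $e^{\gamma t}$ with $\gamma=\max|c|+1$) to make the zeroth-order coefficient have a strict favorable sign, and then apply the global-minimum argument of Step five using Proposition \ref{prop2}(a) and the vanishing of $u,v$ at spatial infinity. The only differences are cosmetic (sign conventions for $c$, and concluding $z(\tilde t,\tilde x)\geq 0$ directly versus by contradiction).
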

\begin{proof}
Because of $u\geq0$ and $v\geq0$, then $|||u|||\leq||u_0||_u$ and $|||v|||\leq||v_0||_u$. Let us take an arbitrary $T>0$. Using the mean value theorem we can find $\eta(t,x)\in(0,||v_0||_u)$ such that
	$$\varphi(v(t,x))-\varphi(u(t,x)) = \varphi'(\eta(t,x))(v(t,x)-u(t,x)).$$
Let us set $w=v-u$, then
\begin{equation}\label{edpw}
	\begin{split}
		\partial_tw(t,x) &= k(t)\Delta_\alpha w(t,x) - c(t,x)w(t,x),\\
		w(0) &= v_0-u_0 \geq 0,
	\end{split}
\end{equation}
where
	$$c(t,x) = h(t)\varphi'(\eta(t,x)).\hspace{5em}$$
The function $c$ is measurable and
	$$|c(t,x)| \leq \mathop{\max}\limits_{s\in[0,T]}h(s)\varphi'(||v_0||_u),\;\forall(t,x)\in[0,T]			\times\mathbb{R}^d.$$

We will consider the function
	$$z(t,x) = w(t,x) e^{\gamma t},\; (t,x)\in[0,T]\times\mathbb{R}^d,$$
where $\gamma=\mathop{\max}\left\{|c(t,x)|:(t,x)\in[0,T]\times\mathbb{R}^d\right\}+1$. From \eqref{edpw} we get
\begin{align*}
	\partial_tz &= k(t)\Delta_\alpha z - (c-\gamma)z,\\
	z(0) &= w(0).
\end{align*}

Proceeding as in step five of Theorem \ref{main} we can find a $(\tilde{t},\tilde{x})\in[0,T]$ such that $z(\tilde{t},\tilde{x})=\mathop{\inf}\left\{z(t,x):(t,x)\in[0,T]\times\mathbb{R}^d\right\}$, then
	$$0 \geq k(\tilde{t})\Delta_\alpha z(\tilde{t},\tilde{x}) = (c(\tilde{t},\tilde{x})-\gamma)z(\tilde{t},			\tilde{x}).$$
From this we deduce $z(\tilde{t},\tilde{x})\geq0$. Hence $w(t,x)\geq0$ for all $(t,x)\in[0,T]\times\mathbb{R}^d$. Then we let $T\to\infty$. \hfill
\end{proof}

\vspace{0.5cm}
As an application of the comparison theorem we give a different condition of Theorem \ref{pcasi} for the convergence of $\lim_{t\to\infty}\int_{\mathbb{R}^d}u(t,x)dx$ when $\varphi$ is a power function.

\begin{proof}[Proof of Theorem \ref{cppo}]
From \eqref{ceye} we have
	$$||u(t)||_\beta^\beta \leq ||p(K(t))\ast u_0||_\beta^\beta.$$
Applying twice the Young's inequality
	$$||p(K(t))\ast u_0||_\beta \leq ||p(K(t))||_\beta||u_0||_1\text{  \ and  \  }								||p(K(t))\ast u_0||_\beta \leq ||p(K(t))||_1||u_0||_\beta.$$
Thus
	$$||u(t)||_\beta^\beta \leq \mathop{\min}\left\{||p(K(t))||_\beta^\beta||u_0||_1^\beta,\enspace 				||p(K(1))||_1^\beta||u_0||_\beta^\beta\right\}.$$
From \eqref{ppu} and \eqref{dnpv} we obtain
\begin{equation}\label{dpbp}
	||u(t)||_\beta^\beta \leq \mathop{\min}\left\{cK(t)^{-\frac{d}{\alpha}\left(\beta-1\right)}						||u_0||_1^\beta, ||u_0||_\beta^\beta\right\}.
\end{equation}

For each $0<\varepsilon\leq1$ we denote by $u_\varepsilon$ the solution of the differential equation \eqref{edps} with initial condition $\varepsilon u_0$. Setting
	$$M_\varepsilon(t) = \int_{\mathbb{R}^d}u_\varepsilon(t,x)dx,\;t\geq0,$$
we see from \eqref{itpm} that
	$$M_\varepsilon(t) = \varepsilon||u_0||_1 - \int_0^t h(s)||u_\varepsilon(s)||_\beta^\beta ds,\;t\geq0.$$
Letting $t\to\infty$,
\begin{equation}\label{ipme}
	M_\varepsilon(\infty) = \varepsilon\left\{||u_0||_1 - \frac{1}{\varepsilon}\int_0^\infty h(s)						||u_\varepsilon(s)||_\beta^\beta ds\right\}.
\end{equation}
Using the inequality \eqref{dpbp} we have
	$$\frac{1}{\varepsilon}\int_0^\infty h(s)||u_\varepsilon(s)||_\beta^\beta ds \leq \varepsilon^{\beta-1}				\int_0^\infty h(s)\mathop{\min}\left\{cK(s)^{-\frac{d}{\alpha}\left(\beta-1\right)}||u_0||_1^\beta,				\;||u_0||_\beta^\beta\right\}ds.$$
Since $\beta>1$, then $-\frac{d}{\alpha}\left(\beta-1\right)<0$, therefore
	$$\frac{1}{\varepsilon}\int_0^\infty h(s)||u_\varepsilon(s)||_\beta^\beta ds \leq c\varepsilon^{\beta-1}			\left\{\int_0^{\tilde{t}}h(s)ds + \int_{\tilde{t}}^\infty h(s)K(s)^{-\frac{d}{\alpha}								\left(\beta-1\right)}ds\right\},$$
for some $\tilde{t}>0$ large enough. This implies
	$$\mathop{\limsup}\limits_{\varepsilon\to0}\frac{1}{\varepsilon}\int_0^\infty h(s)									||u_\varepsilon(s)||_\beta^\beta ds = 0.$$
Due to $||u_0||_1>0$ we can take $0<\varepsilon<||u_0||_1/2$ such that
	$$\frac{1}{\varepsilon}\int_0^\infty h(s)||u_0(s)||_\beta^\beta ds < \frac{||u_0||_1}{2}.$$
Hence, the expression \eqref{ipme} yields
	$$M_\varepsilon(\infty) \geq \frac{\varepsilon}{2}||u_0||_1.$$
The Theorem \ref{cpth} implies $u\geq u_\varepsilon\geq0$, then $M(\infty)\geq M_\varepsilon(\infty)>0$. \hfill
\end{proof}

\subsection*{Acknowledgment}

This work was partially supported by the grant PIM16-4 of Universidad Aut\'onoma de Aguascalientes.

\bigskip

\end{document}